\newtheorem{theorem}{Theorem}[section] 
\newtheorem{Theorem}[theorem]{Theorem}
\theoremstyle{definition}
\newtheorem{Remark}[theorem]{Remark}
\begin{document}

\def\AAAB{{\hspace{-0.5mm}\underbrace{1,\ldots,1}_{k-2~\text{times}},2}}
\def\subAAAB{{\hspace{-2mm}\underbrace{\scriptstyle 1,\ldots,1}_{k-2~\text{times}}\hspace{-2mm},2}}
\def\ellet{{\ell\text{-\'et}}}
\def\lala{\langle\!\langle}
\def\rara{\rangle\!\rangle}
\def\PP{\mbox{\large $\mathsf{P}$}}
\def\I{{\mathtt{i}}}
\def\Coeff{\mathtt{Coeff}}
\def\nyoroto{{\leadsto}}
\def\N{{\mathbb N}}
\def\C{{\mathbb C}}
\def\Z{{\mathbb Z}}
\def\R{{\mathbb R}}
\def\Q{{\mathbb Q}}
\def\bQ{{\overline{\mathbb Q}}}
\def\Gal{{\mathrm{Gal}}}
\def\et{\text{\'et}}
\def\ab{\mathrm{ab}}
\def\proP{{\text{pro-}p}}
\def\padic{{p\mathchar`-\mathrm{adic}}}
\def\la{\langle}
\def\ra{\rangle}
\def\scM{\mathscr{M}}
\def\lala{\la\!\la}
\def\rara{\ra\!\ra}
\def\ttx{{\mathtt{x}}}
\def\tty{{\mathtt{y}}}
\def\ttz{{\mathtt{z}}}
\def\bkappa{{\boldsymbol \kappa}}
\def\scLi{{\mathscr{L}i}}
\def\sLL{{\mathsf{L}}}
\def\cY{{\mathcal{Y}}}
\def\Coker{\mathrm{Coker}}
\def\Spec{\mathrm{Spec}\,}
\def\Ker{\mathrm{Ker}}
\def\CHplus{\underset{\mathsf{CH}}{\oplus}}
\def\check{{\clubsuit}}
\def\kaitobox#1#2#3{\fbox{\rule[#1]{0pt}{#2}\hspace{#3}}\ }
\def\vru{\,\vrule\,}
\newcommand*{\longhookrightarrow}{\ensuremath{\lhook\joinrel\relbar\joinrel\rightarrow}}
\newcommand{\hooklongrightarrow}{\lhook\joinrel\longrightarrow}
\def\nyoroto{{\rightsquigarrow}}
\newcommand{\pathto}[3]{#1\overset{#2}{\dashto} #3}
\newcommand{\pathtoD}[3]{#1\overset{#2}{-\dashto} #3}
\def\dashto{{\,\!\dasharrow\!\,}}
\def\ovec#1{\overrightarrow{#1}}
\def\isom{\,{\overset \sim \to  }\,}
\def\GT{{\widehat{GT}}}
\def\bfeta{{\boldsymbol \eta}}
\def\brho{{\boldsymbol \rho}}
\def\sha{\scalebox{0.6}[0.8]{\rotatebox[origin=c]{-90}{$\exists$}}}
\def\upin{\scalebox{1.0}[1.0]{\rotatebox[origin=c]{90}{$\in$}}}
\def\downin{\scalebox{1.0}[1.0]{\rotatebox[origin=c]{-90}{$\in$}}}
\def\torusA{{\epsfxsize=0.7truecm\epsfbox{torus1.eps}}}
\def\torusB{{\epsfxsize=0.5truecm\epsfbox{torus2.eps}}}


\makeatletter
\renewcommand{\theequation}{%
\thesection.\arabic{equation}}
\@addtoreset{equation}{section}
\makeatother

\title{
Duality-reflection formulas of\\multiple polylogarithms
and their $\ell$-adic\\Galois analogues}
\author[D.~Shiraishi]{Densuke Shiraishi}
\address{Department of Mathematics,
Graduate School of Science,
Osaka University, Toyonaka, Osaka 560-0043, Japan}
\email{densuke.shiraishi@gmail.com,
u848765h@ecs.osaka-u.ac.jp}
\date{}
\subjclass[2010]{11G55; 11F80, 14H30}
\keywords{multiple polylogarithm, $\ell$-adic Galois multiple polylogarithm, duality-reflection formula}

\maketitle

\markboth{D.Shiraishi}
{Duality-reflection formula}

\begin{abstract}
In the present paper,
we derive formulas of complex and $\ell$-adic multiple polylogarithms,
which have two aspects:
a duality in terms of indexes
and a reflection in terms of variables.
We provide an algebraic proof of
these formulas
by using algebraic relations between associators 
arising from the $S_3$-symmetry of the projective line minus three points.
\end{abstract}



\section{Introduction and main results}

The purpose of this paper is to derive a series of functional equations that generalizes Oi-Ueno's reflection formulas between complex multiple polylogarithms at $z$ and $1-z$.
This specializes to the duality formula for multiple zeta values when $z \to 1$.
We also show the $\ell$-adic Galois analog of these equations by tracing the same argument in a parallel way to the complex case.

For a multi-index $\mathbf{k}=(k_1\dots,k_d) \in \mathbb{N}^d$ and a topological path
$\gamma \in \pi_1^{\rm top}\left(\mathbb{P}^1(\mathbb{C}) \backslash \{0,1,\infty\}; \overrightarrow{01}, z\right)$
from the standard tangential base point $\overrightarrow{01}$ to a (possibly, tangential base) point $z$,
the complex multiple polylogarithm 
$Li_{\mathbf{k}}\left(z;\gamma\right)$
is defined as an iterated integral along $\gamma$.
As is well known, $Li_{\mathbf{k}}\left(z;\gamma\right)$ coincides with a certain signed coefficient of the KZ fundamental solution
\[
G_0(X,Y)(z;\gamma) \in \mathbb{C} \langle \langle X,Y \rangle \rangle
\]
(See \S \ref{cmpoly}  for details).
The multiple zeta value $\zeta(\mathbf{k})$ appears as its special value at the tangential base point $\overrightarrow{10}$ with  the straight path $\delta \in \pi_1^{\rm top}\left(\mathbb{P}^1(\mathbb{C}) \backslash \{0,1,\infty\}; {\overrightarrow{01}}, \overrightarrow{10}\right)$ along the unit interval $(0,1) \subset \mathbb{P}^1(\mathbb{R}) \backslash \{0,1,\infty\}$.
Our main result of the complex case is then as follows.

\begin{Theorem}[The duality-reflection formula of complex multiple polylogarithms] \label{M1}
Given a (possibly, tangential base) point
$z$ of $\mathbb{P}^1(\mathbb{C}) \backslash \{0,1,\infty\}$
and a path $\gamma \in \pi_1^{\rm top}\left(\mathbb{P}^1(\mathbb{C}) \backslash \{0,1,\infty\}; \overrightarrow{01}, z\right)$,
define the path $\gamma'$ associated to $\gamma$ by
\begin{align}\label{gamma'}
\gamma':=\delta \cdot \phi(\gamma) \in \pi_1^{\rm top}\left(\mathbb{P}^1(\mathbb{C}) \backslash \{0,1,\infty\}; \overrightarrow{01}, 1-z\right),
\end{align}
where 
$\phi \in {\rm Aut}\left(\mathbb{P}^1(\mathbb{C}) \backslash \{0,1,\infty\}\right)$ is given by
$\phi(t)=1-t$ and
paths are composed from left to right.
For any $n,m \in \mathbb{Z}_{\geq 2}$,
the following holds:
\begin{align} \label{Main1}
\sum_{j=0}^{m-1}\frac{\left(-\log\left(z;\gamma \right)\right)^j}{j!}Li_{\hspace{-0.2cm}\underbrace{\scriptstyle 1,\ldots,1}_{n-2~\text{times}}\hspace{-0.2cm},m-j}~\hspace{-0.2cm}\left(z;\gamma\right)
&+\sum_{j=0}^{n-2}\frac{\left(-\log\left(1-z;\gamma'\right)\right)^j}{j!}Li_{\hspace{-0.2cm}\underbrace{\scriptstyle 1,\ldots,1}_{m-2~\text{times}}\hspace{-0.2cm},n-j}~\hspace{-0.2cm}\left(1-z;\gamma'\right)\\
&=\zeta({\underbrace{1,\ldots,1}_{n-2~\text{times}},m}) \notag
\end{align}
where $\log(z;\gamma):=\int_{\delta^{-1} \cdot \gamma}\frac{dt}{t}$
is the logarithm function with respect to $\gamma$.
\end{Theorem}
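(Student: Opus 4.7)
The plan is to derive \eqref{Main1} as an identity of word-coefficients in the completed free algebra $\mathbb{C}\langle\langle X,Y\rangle\rangle$, by producing an algebraic relation between the two KZ fundamental solutions $G_0(X,Y)(z;\gamma)$ and $G_0(X,Y)(1-z;\gamma')$ from the $\phi$-symmetry of $\mathbb{P}^1\setminus\{0,1,\infty\}$, and then extracting the coefficient of a single word.

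First, I would establish the master identity
\[
G_0(X,Y)(1-z;\gamma')=\Phi(X,Y)\cdot G_0(Y,X)(z;\gamma),
\]
where $\Phi(X,Y):=G_0(X,Y)(\overrightarrow{10};\delta)$ is the Drinfeld associator. This follows from three standard ingredients: the multiplicativity of the generating series of iterated integrals under path concatenation applied to $\gamma'=\delta\cdot\phi(\gamma)$; the identification of the $\delta$-factor with $\Phi(X,Y)$ by definition; and the observation that $\phi$ pulls back the 1-forms $dt/t$ and $dt/(t-1)$ to each other (up to orientation signs that cancel), so that iterated integrals along $\phi(\gamma)$ coincide with those along $\gamma$ after swapping $X\leftrightarrow Y$. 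Some care is needed at the tangential base point $\overrightarrow{10}$ appearing at the concatenation to confirm that the associator enters with no extra regularization defect.

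Next, I would rewrite each of the two sums on the left-hand side of \eqref{Main1} as the coefficient of a single word in an exponentially twisted KZ solution. Concretely, right-multiplication by $e^{-X\log(z;\gamma)}$ shifts the trailing $X$-power of each monomial down by $j$ and weights it by $(-\log(z;\gamma))^j/j!$, so that the coefficient of $Y^{n-1}X^{m-1}$ in $G_0(X,Y)(z;\gamma)\cdot e^{-X\log(z;\gamma)}$ reproduces the first sum of \eqref{Main1} up to an overall sign. The analogous identification works for the second sum with $(z,\gamma,n,m)$ replaced by $(1-z,\gamma',m,n)$ after the $X\leftrightarrow Y$ swap; the upper bound $j\le n-2$ (rather than $n-1$) reflects the fact that the top $X$-power has been absorbed into the associator factor on the right-hand side of the master identity.

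Finally, multiplying the master identity on the right by the appropriate exponential, extracting the coefficient of the word $Y^{n-1}X^{m-1}$, and noting that $\zeta(\underbrace{\scriptstyle 1,\ldots,1}_{n-2},m)$ is by definition the corresponding coefficient of $\Phi(X,Y)$, yields \eqref{Main1}. The main obstacle I expect is the second step: the exponential-twist identification requires delicate treatment of the tangential base points, including the regularized logarithm $\log(z;\gamma)$ when $z$ itself is a tangential base point, together with careful tracking of sign conventions so that no spurious cross-terms survive and the two summation ranges in \eqref{Main1} come out exactly as stated. Once that bookkeeping is in place, the theorem is a pure coefficient comparison in $\mathbb{C}\langle\langle X,Y\rangle\rangle$.
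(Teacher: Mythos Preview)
Your overall strategy---deduce an associator chain rule from the $\phi$-symmetry and then extract a single word-coefficient---is exactly the paper's. However, the implementation you sketch diverges from the paper at the crucial technical step and, as written, has a genuine gap.

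First, two orientation errors. In the paper's conventions the chain rule reads $G_0(X,Y)(z;\gamma)=G_0(Y,X)(1-z;\gamma')\cdot\Phi(X,Y)$, so in your form the associator sits on the \emph{right}, not the left. Also, the word encoding $Li_{1,\dots,1,m-j}$ (with $n-2$ ones) is $w=X^{m-j-1}Y^{n-1}$, which \emph{begins} with the $X$-block; hence it is \emph{left}-multiplication by $e^{-X\log(z;\gamma)}$, extracting $\Coeff_{X^{m-1}Y^{n-1}}$, that packages the first sum---not right-multiplication and the word $Y^{n-1}X^{m-1}$.

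More seriously, even after these fixes the exponential-twist route does not yield a clean two-term split. When you expand $\Coeff_{X^{m-1}Y^{n-1}}\bigl(e^{-X\log z}\cdot G_0(Y,X)(1-z;\gamma')\cdot\Phi(X,Y)\bigr)$ over all factorizations of the single word $X^{m-1}Y^{n-1}$, the suffixes $X^{j}Y^{n-1}$ with $1\le j\le m-1$ contribute nonzero cross terms $(-1)^{n-1}\zeta(\underbrace{1,\dots,1}_{n-2},j{+}1)$ from $\Phi$, multiplied by nontrivial coefficients of $e^{-X\log z}\cdot G_0(Y,X)(1-z;\gamma')$. These do not obviously vanish, and your outline gives no mechanism to kill them; the hand-wave about the range $j\le n-2$ being ``absorbed into the associator factor'' does not address this. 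The paper avoids this problem by a different device: it first uses the shuffle relation (group-likeness of $G_0$) together with the recursion $X^j\shuffle X^{m-j-1}Y^{n-1}=X(X^{j-1}\shuffle X^{m-j-1}Y^{n-1})+X(X^{j}\shuffle X^{m-j-2}Y^{n-1})$ to telescope each sum down to a single coefficient of the \emph{shuffle polynomial} $Y(X^{m-1}\shuffle Y^{n-2})$ (resp.\ $X(Y^{m-1}\shuffle X^{n-2})$). It is precisely this shuffle structure, combined with $\Coeff_{X^j}(\Phi)=\Coeff_{Y^j}(\Phi)=0$ for $j\ge1$, that forces all cross terms in the product decomposition to cancel, leaving only the two extreme summands. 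Your proposal is missing this key idea.
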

\noindent
This functional equation has two aspects:
a duality $n \leftrightarrow m$ with respect to indexes
and a reflection $z \leftrightarrow 1-z$ with respect to variables.
We derive the functional equation from an algebraic relation (chain rule) between associators
\[
G_0(X,Y)(z;\gamma)=G_0(Y,X)(1-z;\gamma') \cdot G_0(X,Y)(\overrightarrow{10};\delta)
\]
along the path composition (\ref{gamma'}),
where $G_0(X,Y)(\overrightarrow{10};\delta)$ is the so-called Drinfeld associator.

We also deal with the $\ell$-adic Galois case for any prime number $\ell$.
Let $K$ be a subfield of $\mathbb{C}$ and
$G_K:=\Gal(\overline{K}/K)$
the absolute Galois group of $K$ with respect to its algebraic closure $\overline{K}$ in $\mathbb{C}$. 
Suppose $z$ is $K$-rational (possibly, tangential base) point of $\mathbb{P}^1 \backslash \{0,1,\infty\}$.
Consider each $\gamma \in \pi_1^{\rm top}\left(\mathbb{P}^1(\mathbb{C}) \backslash \{0,1,\infty\}; \overrightarrow{01}, z\right)$ as a pro-$\ell$ \'etale path $\gamma \in \pi_1^\ellet \left(\mathbb{P}^1_{\overline{K}} \backslash \{0,1,\infty\};\overrightarrow{01},{z}\right)$
by the comparison map.
For $\sigma \in G_K$,
the $\ell$-adic Galois multiple polylogarithm ${Li}^\ell_{\mathbf{k}}(z;\gamma,\sigma)$
is defined as a certain signed coefficient of the $\ell$-adic Galois associator 
\[
{\mathfrak f}^{z,\gamma}_{\sigma}(X,
Y) \in \mathbb{Q}_{\ell} \langle \langle X,
Y \rangle \rangle.
\]
This $\ell$-adic multiple polylogarithm is an $\ell$-adic \'etale avatar of $Li_{\mathbf{k}}\left(z;\gamma\right)$ introduced by Wojtkowiak (See \S \ref{lmpoly}  for details).
The $\ell$-adic Galois multiple zeta value (or called $\ell$-adic multiple Soul\'e element) ${\boldsymbol \zeta}_{{\mathbf k}}^\ell(\sigma)$ is defined as its special value ${Li}^\ell_{\mathbf{k}}\left(\overrightarrow{10};\delta,\sigma\right)$.
Our another main result is then as follows.

\begin{Theorem}[The duality-reflection formula of $\ell$-adic Galois multiple polylogarithms] \label{M2}
Given a $K$-rational (possibly, tangential base) point
$z$ of $\mathbb{P}^1 \backslash \{0,1,\infty\}$
and $\gamma \in \pi_1^{\rm top}\left(\mathbb{P}^1(\mathbb{C}) \backslash \{0,1,\infty\}; \overrightarrow{01}, z\right)$,
define the path $\gamma'$ associated to $\gamma$ as in (\ref{gamma'}).
For any $\sigma \in G_K$,
the following holds:
\begin{align} \label{Main2}
\sum_{j=0}^{m-1}\frac{\left(\rho_{z,\gamma}(\sigma)\right)^j}{j!}Li^{\ell}_{\hspace{-0.2cm}\underbrace{\scriptstyle 1,\ldots,1}_{n-2~\text{times}}\hspace{-0.2cm},m-j}~\hspace{-0.1cm}(z;\gamma,\sigma)
&+\sum_{j=0}^{n-2}\frac{\left(\rho_{1-z,\gamma'}(\sigma)\right)^j}{j!}Li^{\ell}_{\hspace{-0.2cm}\underbrace{\scriptstyle 1,\ldots,1}_{m-2~\text{times}}\hspace{-0.2cm},n-j}~\hspace{-0.1cm}(1-z;\gamma',\sigma)\\
&={\boldsymbol \zeta}^{\ell}_{\hspace{-0.2cm}\underbrace{\scriptstyle 1,\ldots,1}_{n-2~\text{times}}\hspace{-0.2cm},m}(\sigma) \notag
\end{align}
where
$\rho_{z,\gamma}: G_K \to \mathbb{Z}_{\ell}$ is the Kummer 1-cocycle defined by $\sigma(z^{1/{\ell^k}})=\zeta_{\ell^k}^{\rho_{z,\gamma}(\sigma)~{\rm mod}~\ell^k}z^{1/{\ell^k}}$
with respect to the $\ell$-th power roots $\{z^{1/{\ell^k}}\}_{k}$ determined by $\gamma$.
\end{Theorem}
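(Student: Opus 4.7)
The plan is to transport, step by step, the algebraic argument sketched for Theorem~\ref{M1} from the de~Rham setting of KZ associators to the pro-$\ell$ \'etale setting of Galois associators. The proof of Theorem~\ref{M1} rests on three ingredients: the chain rule $G_0(X,Y)(z;\gamma)=G_0(Y,X)(1-z;\gamma') \cdot G_0(X,Y)(\overrightarrow{10};\delta)$ along $\gamma'=\delta \cdot \phi(\gamma)$, a coefficient extraction at specific words (of the form $Y^{m-j-1}XY^{n-2}X$ and its dual), and the identification of $-\log(z;\gamma)$ with a degree-one coefficient of $G_0(X,Y)(z;\gamma)$. Each of these three ingredients has an $\ell$-adic Galois avatar, and the proof of (\ref{Main2}) is obtained by assembling them in exactly the same order.

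First I would prove the $\ell$-adic chain rule
\[
\mathfrak{f}^{z,\gamma}_\sigma(X,Y)
=\mathfrak{f}^{1-z,\gamma'}_\sigma(Y,X)\cdot \mathfrak{f}^{\overrightarrow{10},\delta}_\sigma(X,Y).
\]
This should follow from three inputs parallel to the complex case: (i)~the cocycle identity for the Galois associator along concatenated paths, (ii)~$G_K$-equivariance of $\phi$ (which is defined over $\mathbb{Q}$ and hence commutes with $\sigma$), and (iii)~the fact that $\phi_\ast$ on the pro-$\ell$ \'etale fundamental group swaps, up to an inner twist, the canonical generators corresponding to small loops around $0$ and $1$, which in $\mathbb{Q}_\ell\lala X,Y\rara$ manifests as the involution $X\leftrightarrow Y$.

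Next I would extract coefficients of the appropriate words from the chain rule. The polylogarithm terms in (\ref{Main2}) correspond to signed coefficients of $\mathfrak{f}^{z,\gamma}_\sigma(X,Y)$ and $\mathfrak{f}^{1-z,\gamma'}_\sigma(Y,X)$; the $\ell$-adic multiple zeta value term to a coefficient of the $\ell$-adic Drinfeld associator $\mathfrak{f}^{\overrightarrow{10},\delta}_\sigma(X,Y)$; and the powers $\rho_{z,\gamma}(\sigma)^j$, $\rho_{1-z,\gamma'}(\sigma)^j$ arise when one expands the product on the right-hand side of the chain rule and isolates the degree-one coefficient of $X$ in the associator at $z$ (respectively of $Y$ in the associator at $1-z$). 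The crucial algebraic input is that this degree-one coefficient equals the Kummer $1$-cocycle $\rho_{z,\gamma}(\sigma)$, a standard identification in Wojtkowiak's theory that is precisely the $\ell$-adic analogue of $-\log(z;\gamma)$. Under this dictionary, every $\log$-power manipulation from the complex case translates verbatim into the corresponding $\rho$-power manipulation.

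The main obstacle I anticipate is the careful handling of tangential base points and of the $\phi_\ast$-action on pro-$\ell$ \'etale paths. One must verify that $\phi$ sends $\overrightarrow{01}$ to $\overrightarrow{10}$ as tangential base points, that the comparison isomorphism between topological and pro-$\ell$ \'etale fundamental groupoids intertwines both the Galois action and $\phi_\ast$, and that the inner conjugations inherent to the cocycle identity in step~(i) do not disturb the leading-order coefficients used in the extraction. Once these geometric compatibilities are pinned down, the remainder of the proof is a coefficient-by-coefficient mirror of the complex argument and produces no new difficulty.
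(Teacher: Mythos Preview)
Your high-level plan---carry the chain rule and the $\log\leftrightarrow\rho$ dictionary from the complex to the $\ell$-adic setting---is exactly what the paper does, but your account of the coefficient-extraction step misidentifies the mechanism and, as written, would not produce (\ref{Main2}).

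The ingredient you never mention is the \emph{shuffle relation} coming from group-likeness of $\mathfrak{f}^{z,\gamma}_\sigma$. The paper does not obtain the powers $\rho_{z,\gamma}(\sigma)^j$ by ``expanding the product on the right-hand side of the chain rule''. Rather, one has $\tfrac{(-\rho_{z,\gamma}(\sigma))^j}{j!}=\Coeff_{X^j}(\mathfrak{f}^{z,\gamma}_\sigma)$ and $Li^\ell_{1,\ldots,1,m-j}(z;\gamma,\sigma)=(-1)^{n-1}\Coeff_{X^{m-j-1}Y^{n-1}}(\mathfrak{f}^{z,\gamma}_\sigma)$, so by the shuffle relation the first sum in (\ref{Main2}) equals $\sum_{j}(-1)^{n+j-1}\Coeff_{X^j\shuffle X^{m-j-1}Y^{n-1}}(\mathfrak{f}^{z,\gamma}_\sigma)$, computed entirely inside the \emph{single} associator at $z$. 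The recursion $X^j\shuffle X^{m-j-1}Y^{n-1}=X(X^{j-1}\shuffle X^{m-j-1}Y^{n-1})+X(X^j\shuffle X^{m-j-2}Y^{n-1})$ makes this alternating sum telescope to $(-1)^{n+m-2}\Coeff_{Y(X^{m-1}\shuffle Y^{n-2})}(\mathfrak{f}^{z,\gamma}_\sigma)$. The same manipulation (with $X\leftrightarrow Y$) packages the second sum as $(-1)^{n+m-3}\Coeff_{X(Y^{m-1}\shuffle X^{n-2})}(\mathfrak{f}^{1-z,\gamma'}_\sigma)$, and applied to $\mathfrak{f}^{\overrightarrow{10},\delta}_\sigma$ (where $\rho_{\overrightarrow{10}}=0$ kills all $j\geq 1$) it yields the $\boldsymbol{\zeta}^\ell$ term. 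Only \emph{after} these three compressions does one invoke the chain rule (\ref{rel3}), at the fixed combination $Y(X^{m-1}\shuffle Y^{n-2})$. If instead you extract a single monomial from (\ref{rel3}) and expand the product, the cross terms mix $Li^\ell$-values at $1-z$ with assorted $\ell$-adic MZVs and do not give (\ref{Main2}) directly. Two minor corrections: the word for $Li^\ell_{1,\ldots,1,m-j}$ is $X^{m-j-1}Y^{n-1}$, not $Y^{m-j-1}XY^{n-2}X$; and the geometric compatibilities you worry about (tangential base points, $\phi_\ast$, inner twists) are already absorbed into the preliminaries---(\ref{rel3}) is quoted as a fact, so the proof proper is purely the shuffle/telescoping algebra above.
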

\noindent
By reinterpreting the proof of the complex case (\ref{Main1}) after replacing $G_0(X,Y)(z;\gamma)$ by ${\mathfrak f}^{z,\gamma}_{\sigma}(X,
Y)$,
we derive
the $\ell$-adic functional equation (\ref{Main2}) from an chain rule between $\ell$-adic Galois associators
\[
{\mathfrak f}^{z,\gamma}_{\sigma}(X,
Y)={\mathfrak f}^{1-z,\gamma'}_{\sigma}(Y,
X) \cdot {\mathfrak f}^{\overrightarrow{10},\delta}_{\sigma}(X,
Y)
\]
along the path composition (\ref{gamma'}).

\begin{Remark}\label{Re1}
The formula (\ref{Main1}) is a generalization of the following functional equation (\ref{Oi1}) due to Oi and Ueno in \cite{Oi09},\cite{OU13},
and (\ref{Main2}) is a generalization of the following functional equation (\ref{Nakamura}) due to Nakamura in \cite{NS22},\cite{N21}.
\begin{align} \label{Oi1}
& \sum_{j=0}^{m-1}\frac{\left(-\log\left(z;\gamma\right)\right)^j}{j!}Li_{m-j}\left(z;\gamma\right)+Li_{\hspace{-0.2cm}\underbrace{\scriptstyle 1,\ldots,1}_{m-2~\text{times}}\hspace{-0.2cm},2}\left(1-z;\gamma'\right)=\zeta(m),\\
\label{Nakamura} 
& \sum_{j=0}^{m-1}\frac{\left(\rho_{z,\gamma}(\sigma)\right)^j}{j!}Li^{\ell}_{m-j}\left(z;\gamma,\sigma\right)+Li^{\ell}_{\hspace{-0.2cm}\underbrace{\scriptstyle 1,\ldots,1}_{m-2~\text{times}}\hspace{-0.2cm},2}\left(1-z;\gamma',\sigma\right)={\boldsymbol \zeta}^{\ell}_{m}(\sigma) \quad (\sigma \in G_K).
\end{align}
\end{Remark}

\begin{Remark}\label{Re2}
By setting $z=\overrightarrow{10}$ (i.e. $z \to 1$ along the real interval) in (\ref{Main1}) and (\ref{Main2}),
we obtain the well-known duality formula of multiple zeta values and its $\ell$-adic Galois analog.
\begin{align} \label{c-dual} 
\zeta({\underbrace{1,\ldots,1}_{m-2~\text{times}},n})&=\zeta({\underbrace{1,\ldots,1}_{n-2~\text{times}},m}),\\
{\boldsymbol \zeta}^{\ell}_{\hspace{-0.2cm}\underbrace{\scriptstyle 1,\ldots,1}_{m-2~\text{times}}\hspace{-0.2cm},n}(\sigma)&={\boldsymbol \zeta}^{\ell}_{\hspace{-0.2cm}\underbrace{\scriptstyle 1,\ldots,1}_{n-2~\text{times}}\hspace{-0.2cm},m}(\sigma) \quad (\sigma \in G_K).
\end{align}
\end{Remark}

\begin{Remark}
In \cite{F04},
Furusho constructed the theory of the $p$-adic KZ equation
and studied the $p$-adic multiple polylogarithm,
which is a $p$-adic crystalline avatar of $Li_{\mathbf{k}}\left(z;\gamma\right)$.
Using the results in \cite{F04},
it is possible to obtain a $p$-adic analog of (\ref{Main1}) in the same way as in the proof of (\ref{Main1}).
\end{Remark}

\bigskip
\noindent
{\it Acknowledgement.}~
The author would like to express deep gratitude to Professor Hiroaki Nakamura for his helpful advice and warm encouragement.
This work was supported by JSPS KAKENHI Grant Numbers JP20J11018.

\section{Preliminaries}

In this section,
we review the basic properties of complex multiple polylogarithms and $\ell$-adic  Galois multiple polylogarithms in preparation for proving the main theorems (\ref{Main1}) and (\ref{Main2}).

For a (possibly, tangential base) point $z$ on $\mathbb{P}^1 \backslash \{0,1,\infty\}$,
we shall write
\[
\pi_1^{\rm top}\left(\mathbb{P}^1(\mathbb{C}) \backslash \{0,1,\infty\}; \overrightarrow{01}, z\right)
\]
for the set of  homotopy classes of piece-wise smooth topological paths on
$\mathbb{P}^1(\mathbb{C}) \backslash \{0,1,\infty\}$ from the tangential base point $\overrightarrow{01}$ to $z$,
and
\[
\pi_1^{\rm top}\left(\mathbb{P}^1(\mathbb{C}) \backslash \{0,1,\infty\}, \overrightarrow{01}\right):=\pi_1^{\rm top}\left(\mathbb{P}^1(\mathbb{C}) \backslash \{0,1,\infty\}; \overrightarrow{01}, \overrightarrow{01}\right)
\]
for the topological fundamental group of $\mathbb{P}^1(\mathbb{C}) \backslash \{0,1,\infty\}$ at the base point $\overrightarrow{01}$
with respect to the path composition $\gamma_1 \cdot \gamma_2:=\gamma_1 \gamma_2$ i.e. paths are composed from left to right.
Let 
\[
l_0,
l_1 \in \pi_1^{\rm top}\left(\mathbb{P}^1(\mathbb{C}) \backslash \{0,1,\infty\}, \overrightarrow{01}\right)
\]
be homotopy classes of smooth loops
circling counterclockwise around
$0$, $1$ respectively,
as FIGURE \ref{paths} shows.
In FIGURE \ref{paths}, the dashed line represents ${\mathbb P}^1({\mathbb R})\backslash \{0,1,\infty\}$ and the upper half-plane is located above.
Then,
$\{l_0,l_1\}$ is a free generating system of $\pi_1^{\rm top}\left(\mathbb{P}^1(\mathbb{C}) \backslash \{0,1,\infty\}, \overrightarrow{01}\right)$.
Fix a homotopy class
\begin{align} \label{pathz}
\gamma \in \pi_1^{\rm top}\left(\mathbb{P}^1(\mathbb{C}) \backslash \{0,1,\infty\}; \overrightarrow{01}, z\right).
\end{align}
Moreover,
we denote by
\[
\delta \in \pi_1^{\rm top}\left(\mathbb{P}^1(\mathbb{C}) \backslash \{0,1,\infty\}; {\overrightarrow{01}}, \overrightarrow{10}\right)
\]
the homotopy class of  the smooth path on $\mathbb{P}^1(\mathbb{C}) \backslash \{0,1,\infty\}$ from $\overrightarrow{01}$ to $\overrightarrow{10}$ along the real interval
as FIGURE \ref{paths} shows.
Let $\phi \in {\rm Aut}\left(\mathbb{P}^1(\mathbb{C}) \backslash \{0,1,\infty\}\right)$ be the automorphism of 
${\mathbb P}^1(\mathbb{C}) \backslash \{0,1,\infty\}$ defined by
$\phi(t)=1-t$.
Then,
we shall define
\begin{align} \label{path1-z}
\gamma':=\delta \cdot \phi(\gamma) \in \pi_1^{\rm top}\left(\mathbb{P}^1(\mathbb{C}) \backslash \{0,1,\infty\}; \overrightarrow{01}, 1-z\right).
\end{align}

\begin{center}
\begin{figure}[hbtp]
\caption{Topological paths on ${\mathbb P}^1({\mathbb C})\backslash 
\{0,1,\infty\}$}
\label{paths}
\begin{tikzpicture} \label{picture}
\draw (2,0) -- (6,0);
\draw (4.6,0) -- (4.5,0.1);
\draw (4.6,0) -- (4.5,-0.1);

\draw (0.1,0) -- (0,0.1);
\draw (0.1,0) -- (0.2,0.1);

\draw (7.3,0) -- (7.4,-0.1);
\draw (7.3,0) -- (7.2,-0.1);


\draw (2.1,0) to [out=0,in=270] (2.8,1);
\draw (2.8,1) to [out=90,in=-10] (2,1.9);
\draw (2,1.9) to [out=170,in=90] (0.1,0);
\draw (0.1,0) to [out=270,in=190] (2,-1.9);
\draw (2,-1.9) to [out=10,in=270] (2.8,-0.7);

\draw (2.8,-0.7) to [out=90,in=360] (2.1,0);
\draw (2,0) to [out=0,in=210] (4,0.4);
\draw (4,0.4) to [out=30,in=180] (6,1.4);
\draw (6,1.4) to [out=0,in=90] (7.3,0);
\draw (7.3,0) to [out=270,in=0] (6,-1.4);
\draw (4,-0.4) to [out=330,in=180] (6,-1.4);
\draw (4,-0.4) to [out=150,in=0] (2,0);

\draw (2.1,0) to [out=0,in=180] (5,1.9);

\draw (6,0) to [out=180,in=0] (3,-1.9);


\draw (3.67,1.1) -- (3.48,1.1);
\draw (3.67,1.1) -- (3.67,0.9);

\draw (3.7,-1.71) -- (3.75,-1.5);
\draw (3.7,-1.71) -- (3.9,-1.75);

\node at (3.6,1.6) {$\gamma$};
\node at (4.5,-1.8) {$\phi(\gamma)$};
\node at (-0.2,0) {$l_0$};
\node at (1.8,-0.4) {$0$};
\node at (4.8,0.35) {$\delta$};
\node at (6.2,-0.4) {$1$};
\node at (7.7,0) {$l_1$};
\node at (2,0) {${\bullet}$};
\node at (6,0) {${\bullet}$};
\node at (5,1.9) {${\bullet}$};
\node at (5,2.2) {${z}$};
\node at (3,-1.9) {${\bullet}$};
\node at (3,-2.2) {${\scriptstyle 1-z}$};


\draw[dotted](-1,0)--(8.5,0);
\end{tikzpicture}
\end{figure}
\end{center}

\subsection{Complex multiple polylogarithms}\label{cmpoly}
Let $z$ be a $\mathbb{C}$-rational (possibly, tangential base) point on $\mathbb{P}^1 \backslash \{0,1,\infty\}$.
For a pair $\mathbf{k}=(k_1\dots,k_d) \in \mathbb{N}^d$ and a path $\gamma (=\gamma_z) \in \pi_1^{\rm top}\left(\mathbb{P}^1(\mathbb{C}) \backslash \{0,1,\infty\}; \overrightarrow{01}, z\right)$,
we shall define the complex logarithm
\begin{align} \label{clog}
\log(z;\gamma):=-\int_{\delta^{-1} \cdot \gamma}\frac{dt}{t}
\end{align}
and the complex multiple polylogarithm $Li_{\mathbf{k}}(z;\gamma)$ as the iterated integral along $\gamma$ below:
\begin{align} \label{cpoly1}
& {Li}_{\mathbf{k}}\left(z;\gamma\right):=
\begin{cases}
\int_{\gamma} \frac{1}{t}Li_{k_1,\cdots,k_d-1}\left(t;\gamma_{t}\right)dt & k_d\neq 1, \\
\int_{\gamma} \frac{1}{1-t}Li_{k_1,\cdots,k_{d-1}}\left(t;\gamma_{t}\right)dt & k_d= 1, \\
\end{cases}  \\ \label{cpoly2}
& Li_1\left(z;\gamma\right):=-\log(1-z;\gamma)=\int_{\gamma}\frac{dt}{1-t},
\end{align}
which can be analytically continued to the pointed universal covering space of
$\left(\mathbb{P}^1(\mathbb{C}) \backslash \{0,1,\infty\},\overrightarrow{01}\right)$.
In particular,
we define 
the multiple zeta value
\begin{align} \label{zeta}
\zeta(\mathbf{k}):={Li}_{\mathbf{k}}\left(\overrightarrow{10};\delta\right) \in \mathbb{R}.
\end{align}

The complex multiple polylogarithm $Li_{\mathbf{k}}(z;\gamma)$ is closely related to the KZ (Knizhnik-Zamolodchikov) equation.
The formal KZ equation on $\mathbb{P}^1(\mathbb{C}) \backslash \{0,1,\infty\}$ is the differential equation
\[
\dfrac{d}{d z}G(X,Y)(z)=\left( \dfrac{X}{z}+\dfrac{Y}{z-1} \right)G(X,Y)(z)
\]
where
$G(X, Y)(z)$ is an analytic (i.e. each of whose coefficient is analytic) function with values in the non-commutative formal power series algebra $\mathbb{C} \langle \langle X, Y \rangle \rangle$.
There exists a unique solution $G_0(X,Y)(z;\gamma) \in \mathbb{C} \langle \langle X,Y \rangle \rangle$ attached to $\gamma \in \pi_1^{\rm top}\left(\mathbb{P}^1(\mathbb{C}) \backslash \{0,1,\infty\}; \overrightarrow{01}, z\right)$ characterized by the asymptotic behavior
\begin{align} \label{G0}
G_0(X,Y)(z;\gamma) \approx \sum_{m=0}^{\infty}\frac{1}{m!}{\left(X \cdot \log \left(z;\gamma\right)\right)}^m~(z \to 0).
\end{align}
Moreover,
we define the Drinfeld associator
\[
\Phi(X,Y):=G_0(X,Y)\left(\overrightarrow{10};\delta\right) \in {\mathbb{C} \langle \langle X,Y \rangle \rangle}.
\]
Then
the following relation (chain rule) holds:
\begin{align} \label{rel1}
& G_0(X,Y)(z;\gamma)=G_0(Y,X)(1-z;\gamma') \cdot \Phi(X,Y).
\end{align}
This relation reflects the path composition (\ref{path1-z}).
Let ${\rm M}$ be the non-commutative free monoid generated by the non-commuting indeterminates $X$, $Y$.
Since $G_0(X,Y)(z;\gamma)$ is group-like in $\mathbb{C} \langle \langle X,Y \rangle \rangle$,
the expansion of $G_0(X,Y)(z;\gamma)$ looks like
\begin{equation}
G_0(X,Y)(z;\gamma)=1+\sum_{w \in {\rm M} \backslash \{1\}}
\Coeff_{w}(G_0(X,Y)(z;\gamma)) \cdot w
\end{equation}
where $\{\Coeff_{w}(G_0(X,Y)(z;\gamma))\}_{w \in {\rm M}}$ is a family of complex numbers.
For $w(\mathbf{k}):=X^{k_d-1}Y\cdots X^{k_1-1}Y$ and 
$j \in \mathbb{N}$,
the following holds (cf. \cite{F04},\cite{F14},\cite{LM96}):
\begin{align} \label{rel2}
& \Coeff_{w(\mathbf{k})}(G_0(X,Y)(z;\gamma))=(-1)^{d} \cdot Li_{\mathbf{k}}(z;\gamma),\\
\label{rel_log}
& \Coeff_{X^j}(G_0(X,Y)(z;\gamma))=\dfrac{\log^{j}(z;\gamma)}{j!},\\
\label{rel_log2}
&\Coeff_{X^j}(G_0(X,Y)(1-z;\gamma'))=\dfrac{\log^{j}(1-z;\gamma')}{j!}.
\end{align}

\subsection{$\ell$-adic Galois multiple polylogarithms}\label{lmpoly}
Let $\ell$ be a prime number and
$K$ a subfield of $\mathbb{C}$ with the algebraic closure $\overline{K} \subset \mathbb{C}$.
Suppose that $z$ is $K$-rational (possibly, tangential base) point on $\mathbb{P}^1 \backslash \{0,1,\infty\}$.
Then
the $\ell$-adic Galois (multiple) polylogarithm introdecud by Zdzis{\l}aw Wojtkowiak in his series of papers \cite{W0}-\cite{W3} is defined as follows.

We shall write
\[
\pi_1^\ellet\left(\mathbb{P}^1_{\overline{K}} \backslash \{0,1,\infty\};\overrightarrow{01},{z}\right)
\]
for the pro-$\ell$-finite set of pro-$\ell$ \'etale paths
 on $\mathbb{P}^1_{\overline{K}} \backslash \{0,1,\infty\}$ from the $K$-rational tangential base point $\overrightarrow{01}$ to ${z}$,
and
\[
\pi_1^\ellet\left(\mathbb{P}^1_{\overline{K}} \backslash \{0,1,\infty\},\overrightarrow{01}\right):=\pi_1^\ellet\left(\mathbb{P}^1_{\overline{K}} \backslash \{0,1,\infty\};\overrightarrow{01},\overrightarrow{01}\right)
\]
for the pro-$\ell$ \'etale fundamental group of $\mathbb{P}^1_{\overline{K}} \backslash \{0,1,\infty\}$ with the base point $\overrightarrow{01}$.
By the comparison map
\begin{align} \label{compmap}
\pi_1^{\rm top}\left(\mathbb{P}^1(\mathbb{C}) \backslash \{0,1,\infty\}; \overrightarrow{01}, \ast\right) \to \pi_1^\ellet\left(\mathbb{P}^1_{\overline{K}} \backslash \{0,1,\infty\};\overrightarrow{01},{\ast}\right)
\end{align} 
{for} $\ast \in \left\{\overrightarrow{01},z,1-z\right\}$,
we regard topological paths $l_0,l_1,\gamma,\gamma'$ on $\mathbb{P}^1(\mathbb{C}) \backslash \{0,1,\infty\}$ as pro-$\ell$ \'etale paths on $\mathbb{P}^1_{\overline{K}} \backslash \{0,1,\infty\}$.
Then $\pi_1^\ellet\left(\mathbb{P}^1_{\overline{K}} \backslash \{0,1,\infty\},\overrightarrow{01}\right)$ is the free pro-$\ell$ group of rank $2$ with topologically generating system $\{l_0,l_1\}$.

We focus on the natural action of $G_K$ on $\pi_1^\ellet \left(\mathbb{P}^1_{\overline{K}} \backslash \{0,1,\infty\};\overrightarrow{01},{z}\right)$ (cf. \cite[2.8]{N99}, \cite[(1.1)]{NW99}).
Since $z$ is $K$-rational,
this Galois action is well-defined.
For each $\sigma \in G_K$,
we define a pro-$\ell$ \'etale loop
\begin{align} \label{f}
{\mathfrak f}^{z,\gamma}_{\sigma}:=
\gamma \cdot \sigma(\gamma)^{-1} \in \pi_1^\ellet\left(\mathbb{P}^1_{\overline{K}} \backslash \{0,1,\infty\},\overrightarrow{01}\right).
\end{align}
Consider the multiplicative Magnus embedding into the algebra of non-commutative formal power series
\[E :\pi_1^\ellet\left(\mathbb{P}^1_{\overline{K}} \backslash \{0,1,\infty\},\overrightarrow{01}\right) \hookrightarrow \mathbb{Q}_{\ell} \langle \langle X,
Y \rangle \rangle\]
defined by
$E(l_0)={\rm exp}(X):=\sum_{n=0}^{\infty}\frac{1}{n!}{X}^n,~
E(l_1)={\rm exp}(Y)$.
We get a formal power series
\begin{align} \label{f2}
{\mathfrak f}^{z,\gamma}_{\sigma}(X,
Y):=E({\mathfrak f}^{z,\gamma}_{\sigma}) \in \mathbb{Q}_{\ell} \langle \langle X,
Y \rangle \rangle
\end{align}
called the $\ell$-adic Galois associator.
If $z=\overrightarrow{10}$ and $\gamma=\delta$, 
it is called the $\ell$-adic Ihara associator in \cite[Definition 2.32]{F07}.
By the path composition (\ref{path1-z}) and (\ref{f}),
the following relation holds:
\begin{align} \label{rel3}
& {\mathfrak f}^{z,\gamma}_{\sigma}(X,
Y)={\mathfrak f}^{1-z,\gamma'}_{\sigma}(Y,
X) \cdot {\mathfrak f}^{\overrightarrow{10},\delta}_{\sigma}(X,
Y).
\end{align}
The power series ${\mathfrak f}^{z,\gamma}_{\sigma}(X,
Y)$ is an $\ell$-adic Galois analog of the KZ fundamental solution $G_0(X,Y)(z;\gamma)$ in (\ref{G0}), and
the relation (\ref{rel3}) is an $\ell$-adic Galois analog of the chain rule (\ref{rel1}) of KZ fundamental solutions.
Since
${\mathfrak f}^{z,\gamma}_{\sigma}(X,
Y)$ is group-like in $\mathbb{Q}_{\ell} \langle \langle X,
Y \rangle \rangle$,
the expansion of ${\mathfrak f}^{z,\gamma}_{\sigma}(X,
Y)$ looks like
\begin{equation}
{\mathfrak f}^{z,\gamma}_{\sigma}(X,
Y)=1+\sum_{w \in {\rm M} \backslash \{1\}}
\Coeff_{w}\left({\mathfrak f}^{z,\gamma}_{\sigma}(X,
Y)\right) \cdot w,
\end{equation}
where $\{\Coeff_{w}({\mathfrak f}^{z,\gamma}_{\sigma}(X,
Y))\}_{w \in {\rm M}}$ is a family of $\ell$-adic numbers.
For $\mathbf{k}=(k_1\dots,k_d) \in \mathbb{N}^d$ and
$w(\mathbf{k}):=X^{k_d-1}Y\cdots X^{k_1-1}Y$,
we shall define the $\ell$-adic Galois multiple polylogarithm and the $\ell$-adic Galois multiple zeta value
\begin{align} \label{lpoly1}
{Li}^\ell_{\mathbf{k}}(z;\gamma,\sigma) 
&:=(-1)^{d} \cdot \Coeff_{w(\mathbf{k})}({\mathfrak f}^{z,\gamma}_{\sigma}(X,Y)), \\
\label{lpoly2}
{\boldsymbol \zeta}_{{\mathbf k}}^\ell(\sigma)&:={Li}^\ell_{\mathbf{k}}\left(\overrightarrow{10};\delta,\sigma\right).
\end{align}
As ${\boldsymbol \zeta}_{{\mathbf k}}^\ell(\sigma)$ is called the $\ell$-adic multiple Soul\'e element in \cite[Definition 2.32]{F07},
${\boldsymbol \zeta}_{{k}}^\ell(\sigma)$ is closely related to the Soul\'e character (cf. \cite[Examples 2.33]{F07}, \cite[REMARK 2]{NW99}).

Let 
$\rho_{z,\gamma}~(\text{resp.}~\rho_{1-z,\gamma'}):G_K \to \Z_\ell$
be the Kummer 1-cocycle of $\{z^{1/{\ell^k}}\}_{k}$ (resp. $\{(1-z)^{1/{\ell^k}}\}_{k}$) determined by $\gamma$ (resp. $\gamma'$).
For $j \in \mathbb{N}$,
the following holds (cf. \cite{NW99},\cite{NW20},\cite{NS22}):
\begin{align} \label{rel_rho}
& \Coeff_{X^j}({\mathfrak f}^{z,\gamma}_{\sigma}(X,
Y))=\dfrac{(-\rho_{z,\gamma}(\sigma))^j}{j!},\\
\label{rel_rho2}
&\Coeff_{X^j}({\mathfrak f}^{1-z,\gamma'}_{\sigma}(X,
Y))=\dfrac{(-\rho_{1-z,\gamma'}(\sigma))^j}{j!}.
\end{align}

The $\ell$-adic Galois multiple polylogarithm is similar to the complex multiple polylogarithm as the TABLE \ref{table} shows.

\renewcommand{\arraystretch}{2.0}
\tabcolsep = 0.4cm
\begin{table}[htb]
\centering
  \caption{Analogy between $\ell$-adic Galois side and complex side}
  \label{table}
  \begin{tabular}{|c||c|}  \hline 
    $\ell$-adic Galois side & complex side  \\ \hline \hline

$z$ : $K$-ratinal base point on $\mathbb{P}^1 \backslash \{0,1,\infty\}$ & $z$ : $\mathbb{C}$-rational base point on $\mathbb{P}^1 \backslash \{0,1,\infty\}$ \\ \hline

    ${\mathfrak f}^{z,\gamma}_{\sigma}(X,
Y) \in \mathbb{Q}_{\ell} \langle \langle X,
Y \rangle \rangle~(\sigma \in G_K)$ & $G_0(X,Y)(z;\gamma) \in \mathbb{C} \langle \langle X,Y \rangle \rangle$ \\ \hline

     ${\mathfrak f}^{\overrightarrow{10},\delta}_{\sigma}(X,
Y) \in \mathbb{Q}_{\ell} \langle \langle X,
Y \rangle \rangle$ & $\Phi(X,Y)=G_0(X,Y)\left(\overrightarrow{10};\delta\right) \in \mathbb{C} \langle \langle X,
Y \rangle \rangle$  \\ \hline

  ${\mathfrak f}^{z,\gamma}_{\sigma}(X,
Y)={\mathfrak f}^{1-z,\gamma'}_{\sigma}(Y,
X) \cdot {\mathfrak f}^{\overrightarrow{10},\delta}_{\sigma}(X,
Y)$  &  $G_0(X,Y)(z;\gamma)=G_0(Y,X)(1-z;\gamma') \cdot \Phi(X,Y)$ \\ \hline

  ${Li}^{\ell}_{\mathbf{k}}(z;\gamma,\sigma) \in  \mathbb{Q}_{\ell}$ & $Li_{\mathbf{k}}(z;\gamma) \in  \mathbb{C}$ \\ \hline 
   ${\boldsymbol \zeta}_{{\mathbf k}}^{\hspace{0.04cm}\ell}: G_K \to \mathbb{Q}_{\ell}$ & $\zeta(\mathbf{k}) \in \mathbb{R}$ \\ \hline 

${Li}^{\ell}_{1}(z;\gamma,\sigma)=\rho_{1-z,\gamma'}(\sigma)$ & $Li_{1}(z;\gamma)=-\log(1-z,\gamma')$ \\ \hline 
  \end{tabular}
\end{table}

\section{Proof of main results}
In this section,
we prove Theorem \ref{M1} and Theorem \ref{M2}.
We fix a topological path $\gamma \in \pi_1^{\rm top}\left(\mathbb{P}^1(\mathbb{C}) \backslash \{0,1,\infty\}; \overrightarrow{01}, z\right)$.
All other symbols are the same as in the previous sections.

\begin{proof}[Proof of Theorem \ref{M1}, Theorem \ref{M2}]
Let $n,m \in \mathbb{Z}_{\geq 2}$.
The following computations are inspired by a remark given in the Appendix of Furusho's lecture note \cite[A.24]{F14} and an insight about the $\ell$-adic Oi-Ueno's equation in Nakamura's Oberwolfach Report \cite{N21}.

First,
we show Theorem \ref{M1}.
Since $G_0(X,Y)(z;\gamma)$ is group-like in $\mathbb{C} \langle \langle X,Y \rangle \rangle$,
the shuffle relation holds for $\{\Coeff_{w}(G_0(X,Y)(z;\gamma))\}_{w \in {\rm M}}$ (cf. [Ree58]), i.e. for $w, w' \in {\rm M}$,
\begin{align} \label{shuffle relation}
\Coeff_{w \shuffle w'}=\Coeff_{w} \cdot \Coeff_{w'}.
\end{align}
By the definition of the shuffle product,
\begin{align} \label{sh1}
X^j \shuffle X^{m-j-1} Y^{n-1}=X(X^{j-1} \shuffle X^{m-j-1} Y^{n-1})+X(X^{j} \shuffle X^{m-j-2} Y^{n-1}).
\end{align}
For $w, w' \in {\rm M}$,
we set
\[
\Coeff_{w+w'}:=\Coeff_{w}+\Coeff_{w'}.
\]
Then,
we obtain
\begin{align*}
&\sum_{j=0}^{m-1}\frac{\left(-\log(z;\gamma)\right)^j}{j!}Li_{\hspace{-0.2cm}\underbrace{\scriptstyle 1,\ldots,1}_{n-2~\text{times}}\hspace{-0.2cm},m-j}(z)\\
=&\sum_{j=0}^{m-1} (-1)^{n+j-1} \cdot \Coeff_{X^j}\Bigl(G_0(Y,X)(z;\gamma)\Bigr) \cdot \Coeff_{X^{m-j-1} Y^{n-1}}\Bigl(G_0(X,Y)(z;\gamma)\Bigr) \quad ({\rm by}~(\ref{rel_log}),(\ref{rel2}))\\
=&\sum_{j=0}^{m-1} (-1)^{n+j-1} \cdot \Coeff_{X^j \shuffle X^{m-j-1} Y^{n-1}}\Bigl(G_0(X,Y)(z;\gamma)\Bigr) \quad \left(\text{by}~\left(\ref{shuffle relation}\right)\right)\\
=&(-1)^{n+m-2} \cdot \Coeff_{Y(X^{m-1} \shuffle Y^{n-2})}\Bigl(G_0(X,Y)(z;\gamma)\Bigr) \quad ({\rm by~(\ref{sh1})}).
\end{align*}
Using (\ref{rel_log2}), (\ref{rel2}), (\ref{zeta}), $\log\left(\overrightarrow{10};\delta\right)=0$,
(\ref{shuffle relation}) and (\ref{sh1}),
we have the following equalities 
by making the same computations as above: 
\begin{align*}
\sum_{j=0}^{n-2}\frac{(-\log(1-z;\gamma'))^j}{j!}Li_{\hspace{-0.2cm}\underbrace{\scriptstyle 1,\ldots,1}_{m-2~\text{times}}\hspace{-0.2cm},n-j}(1-z)
=(-1)^{n+m-3} \cdot \Coeff_{X(Y^{m-1} \shuffle X^{n-2})}\Bigl(G_0(X,Y)(1-z;\gamma')\Bigr),
\end{align*}
and
\begin{align*}
\zeta({\underbrace{1,\ldots,1}_{n-2~\text{times}},m})
=&\left(Li_{\hspace{-0.2cm}\underbrace{\scriptstyle 1,\ldots,1}_{n-2~\text{times}}\hspace{-0.2cm},m}\left(\overrightarrow{10};\delta\right) + \sum_{j=1}^{m-1}\frac{\left(-\log\left(\overrightarrow{10};\delta\right)\right)^j}{j!}Li_{\hspace{-0.2cm}\underbrace{\scriptstyle 1,\ldots,1}_{n-2~\text{times}}\hspace{-0.2cm},m-j}\left(\overrightarrow{10};\delta\right)\right) \\
=&(-1)^{n+m-2} \cdot \Coeff_{Y(X^{m-1} \shuffle Y^{n-2})}\Bigl(G_0(Y,X)\left(\overrightarrow{10};\delta\right)\Bigr).
\end{align*}
Combining 
these equalities
and the following equality
\begin{align*}
&\Coeff_{Y(X^{m-1} \shuffle Y^{n-2})}\Bigl(G_0(X,Y)(z;\gamma)\Bigr)\\
=&\Coeff_{Y(X^{m-1} \shuffle Y^{n-2})}\Bigl(G_0(Y,X)(1-z;\gamma')\Bigr)+\Coeff_{Y(X^{m-1} \shuffle Y^{n-2})}\Bigl(G_0(X,Y)\left(\overrightarrow{10};\delta\right)\Bigr) \quad ({\rm by~(\ref{rel1})})\\
=&\Coeff_{X(Y^{m-1} \shuffle X^{n-2})}\Bigl(G_0(X,Y)(1-z;\gamma')\Bigr)+\Coeff_{Y(X^{m-1} \shuffle Y^{n-2})}\Bigl(G_0(X,Y)\left(\overrightarrow{10};\delta\right)\Bigr),
\end{align*}
we get the desired equation (\ref{Main1}).
This completes the proof of Theorem \ref{M1}.

Next,
we show Theorem \ref{M2}.
Let $\sigma \in G_K$.
Since
${\mathfrak f}^{z,\gamma}_{\sigma}(X,
Y)$ is group-like in $\mathbb{Q}_{\ell} \langle \langle X,
Y \rangle \rangle$,
the shuffle relation holds for $\{\Coeff_{w}({\mathfrak f}^{z,\gamma}_{\sigma}(X,Y))\}_{w \in {\rm M}}$.
Using (\ref{lpoly1}),
(\ref{lpoly2}),
(\ref{rel_rho}),
(\ref{rel_rho2}),
(\ref{shuffle relation}) and (\ref{sh1}),
we obtain the following equalities
by making the same computations as above:
\begin{align*}
&\sum_{j=0}^{m-1}\frac{(\rho_{z,\gamma}(\sigma))^j}{j!}Li^{\ell}_{\hspace{-0.2cm}\underbrace{\scriptstyle 1,\ldots,1}_{n-2~\text{times}}\hspace{-0.2cm},m-j}(z;\gamma,\sigma)
=(-1)^{n+m-2} \cdot \Coeff_{Y(X^{m-1} \shuffle Y^{n-2})}\Bigl({\mathfrak f}^{z,\gamma}_{\sigma}(X,
Y)\Bigr),\\
&\sum_{j=0}^{n-2}\frac{(\rho_{1-z,\gamma'}(\sigma))^j}{j!}Li^{\ell}_{\hspace{-0.2cm}\underbrace{\scriptstyle 1,\ldots,1}_{m-2~\text{times}}\hspace{-0.2cm},n-j}(1-z;\gamma',\sigma)
=(-1)^{n+m-3} \cdot \Coeff_{X(Y^{m-1} \shuffle X^{n-2})}\Bigl({\mathfrak f}^{1-z,\gamma'}_{\sigma}(X,
Y)\Bigr),\\
&{\boldsymbol \zeta}^{\ell}_{\hspace{-0.2cm}\underbrace{\scriptstyle 1,\ldots,1}_{n-2~\text{times}}\hspace{-0.2cm},m}(\sigma) =(-1)^{n+m-2} \cdot \Coeff_{Y(X^{m-1} \shuffle Y^{n-2})}\left({\mathfrak f}^{\overrightarrow{10},\delta}_{\sigma}(X,Y))\right).
\end{align*}
Combining 
these equalities
and the following equality
\begin{align*}
&\Coeff_{Y(X^{m-1} \shuffle Y^{n-2})}\Bigl({\mathfrak f}^{z,\gamma}_{\sigma}(X,
Y)\Bigr)\\
=&\Coeff_{Y(X^{m-1} \shuffle Y^{n-2})}\Bigl({\mathfrak f}^{1-z,\gamma'}_{\sigma}(Y,
X)\Bigr)+\Coeff_{Y(X^{m-1} \shuffle Y^{n-2})}\left({\mathfrak f}^{\overrightarrow{10},\delta}_{\sigma}(X,Y)\right) \quad ({\rm by~(\ref{rel3})})\\
=&\Coeff_{X(Y^{m-1} \shuffle X^{n-2})}\Bigl({\mathfrak f}^{1-z,\gamma'}_{\sigma}(X,
Y)\Bigr)+\Coeff_{Y(X^{m-1} \shuffle Y^{n-2})}\left({\mathfrak f}^{\overrightarrow{10},\delta}_{\sigma}(X,Y)\right), 
\end{align*}
we get the desired equation (\ref{Main2}). 
This completes the proof of Theorem \ref{M2}.
\end{proof}

\end{document}